\def\le{\leqslant}
\def\ge{\geqslant}
\renewcommand{\mod}{\mathop{\rm{mod}}}
\numberwithin{equation}{section}
\newtheorem{cor}{COROLLARY}[section]
\newtheorem{lemma}[cor]{LEMMA}
\theoremstyle{definition}
\newtheorem{theorem}{THEOREM}
\begin{document}
\title{\bf On a theorem of Bredihin and Linnik}
\author[Friedlander]{J.B. Friedlander$^*$}
\thanks{$^*$\ Supported in part by NSERC grant A5123}
\author[Iwaniec]{H. Iwaniec$^{**}$}
\thanks{$^{**}$\ Supported in part by NSF grant DMS-1406981}

\maketitle
\dedicatory \quad\quad\quad\quad\quad {\sl Dedicated to the memory of Yu. V. Linnik.}

\medskip

{\bf Abstract:} We give a new proof of a theorem of B. M. Bredihin which was 
originally proved by extending Linnik's solution, via his dispersion method, 
of a problem of Hardy and Littlewood. 
\footnote{keywords: primes, dispersion, Bombieri-Vinogradov theorem} 
\section{\bf Introduction}

Among the many beautiful consequences of Linnik's dispersion method is an asymptotic formula for the number of solutions to the equation

$$
p= a^2 + b^2 +1 
$$ 
in primes $p\le x$ and integers $a$ and $b$. 
This result of 1965, due to Bredihin [B] was a follow-up to Linnik's celebrated work on the Hardy-Littlewood problem, cf. Chapter 7 of [L]. The involved arguments are lengthy and complicated, though very inventive. Due to much progress over the intervening years, much shorter arguments can now be put forward. This of course does not mean that they are shorter ab-initio. Our purpose here is to illustrate how these arguments can be applied.

\begin{theorem}\label{1} 
Let $S(x)$ be the number of solutions to 
\begin{equation}\label{eq:1.1}
p= a^2 + b^2 +1  
\end{equation} 
in integers $a$ and $b$ and primes $p \equiv 3 (\mod 8), \, p \le x$. We have 
\begin{equation}\label{eq:1.2}
S(x) = c\, 
\frac{x}{\log x} 
+ O\Bigl(x\Bigl( \frac{\log\log x}{\log x}\Bigr)^2\Bigr) ,
\end{equation} 
where the constant $c$ is given by 
\begin{equation}\label{eq:1.3}
c=\frac{\pi}{2}\prod_p\Bigl(1+\frac{\chi (p)}{p(p-1)}\Bigr) , 
\end{equation} 
with $\chi$ being the Dirichlet character of conductor $4$.

\end{theorem}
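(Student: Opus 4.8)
The plan is to linearise the representation count through Jacobi's identity $r(n)=4\sum_{d\mid n}\chi(d)$. Since $p\equiv 3\pmod 8$ forces $p-1=2m$ with $m=\tfrac{p-1}{2}$ odd and $m\equiv 1\pmod 4$, the even divisors of $p-1$ contribute nothing ($\chi(2)=0$) and
$$
S(x)=4\sum_{\substack{p\le x\\ p\equiv 3\,(8)}}\ \sum_{d\mid m}\chi(d).
$$
The force of the hypothesis $p\equiv 3\pmod 8$ is that it makes $\chi(m)=1$, so that, $\chi$ being real and completely multiplicative, the divisor sum is symmetric under $d\mapsto m/d$:
$$
\sum_{d\mid m}\chi(d)=2\sum_{\substack{d\mid m\\ d^{2}\le m}}\chi(d)\ -\ \chi(\sqrt m)\,\mathbf 1_{m=\square},
$$
the square term contributing $O(\sqrt x)$ overall. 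Interchanging summations gives $S(x)=8\sum_{d\ \mathrm{odd}}\chi(d)\,N_d(x)+O(\sqrt x)$, where $N_d(x)$ counts primes $p\le x$ with $p\equiv 3\pmod 8$, $p\equiv 1\pmod{2d}$ and $p>2d^{2}$; the two congruences cut out a single class $a_d\bmod 8d$ with $(a_d,8d)=1$, and $d$ runs over odd integers up to $\sqrt{(x-1)/2}$. It is precisely the factor $2$ manufactured by the symmetry, applied to the convergent series
$$
2\sum_{d\ \mathrm{odd}}\frac{\chi(d)}{\vp(d)}\ =\ 2\,L(1,\chi)\prod_p\Bigl(1+\frac{\chi(p)}{p(p-1)}\Bigr)\ =\ \frac{\pi}{2}\prod_p\Bigl(1+\frac{\chi(p)}{p(p-1)}\Bigr),
$$
that produces the constant $c$ of \eqref{eq:1.3}.

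For the range $d\le \sqrt x\,(\log x)^{-B}$, with $B=B(A)$ large, the Bombieri--Vinogradov theorem applies without fuss: the moduli $8d$ lie comfortably inside its reach, the sum of $\bigl|N_d(x)-\pi(x)/\vp(8d)\bigr|$ over these $d$ is $O\bigl(x(\log x)^{-A}\bigr)$ (the condition $p>2d^{2}$ costing only $O(d)$ per class, hence $O\bigl(x(\log x)^{-2B}\bigr)$ in all), and the tail $\sum_{d>\sqrt x(\log x)^{-B}}\chi(d)/\vp(d)$ is $O\bigl((\log x)^{B+1}/\sqrt x\bigr)$ on account of the cancellation in $\chi$. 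With $\pi(x)$ supplied by the prime number theorem, this contributes the main term $c\,x/\log x$ together with an error well within the one claimed.

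The real obstacle is the complementary range $\sqrt x\,(\log x)^{-B}<d\le\sqrt{(x-1)/2}$, which lies just past the Bombieri--Vinogradov barrier and cannot be evaded by the symmetry already exploited: writing $m=de$ one is driven into the regime where $d$ and $e$ are both of size about $\sqrt x$, i.e. into a genuinely bilinear sum $\sum_d\sum_e\chi(d)\,\Lambda(2de+1)$ over a shifted product --- exactly the configuration for which Linnik created the dispersion method. A Brun--Titchmarsh bound alone yields here only $O(x/\log x)$, of the same size as the main term, so one must extract genuine cancellation. I would do this with the modern descendants of Linnik's method --- the estimates of Fouvry and of Bombieri--Friedlander--Iwaniec for primes in arithmetic progressions to moduli a little beyond $x^{1/2}$, resting ultimately on the Deshouillers--Iwaniec bounds for sums of Kloosterman sums --- which are available because the coefficient $\chi(d)$ factors multiplicatively and the residue is, up to the harmless modulus $8$, the fixed class $1\bmod d$. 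Passing first, by a device of fundamental-lemma type, to the divisors $d$ that carry a factorisation of the shape these estimates demand, and clearing the sparse (essentially prime-like) residual by Brun--Titchmarsh together with Cauchy--Schwarz, brings the contribution of this range down to $O\bigl(x(\log\log x/\log x)^{2}\bigr)$; combined with the main term this yields \eqref{eq:1.2}. The bilinear estimate is where nearly all the difficulty --- and the precise shape of the error term --- resides.
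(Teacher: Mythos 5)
Your opening reduction coincides with the paper's: the Gauss--Jacobi formula gives $S(x)=4\sum_{p}\lambda(m)$ with $m=(p-1)/2$, the hypothesis $p\equiv 3\ (\mod 8)$ forces $m\equiv 1\ (\mod 4)$ so that the divisor sum defining $\lambda(m)$ folds onto the divisors $d\le\sqrt m$, and the constant $c=2L(1,\chi)\prod_p\bigl(1+\chi(p)/p(p-1)\bigr)$ emerges exactly as you compute it. Where your account diverges from what is actually needed is the range $\sqrt x(\log x)^{-B}<d\le\sqrt{(x-1)/2}$. You present this as a genuinely bilinear problem requiring a fresh dispersion-type analysis, a restriction to well-factorable moduli by a fundamental-lemma device, and a separate treatment of a residual set by Brun--Titchmarsh and Cauchy--Schwarz; none of that is necessary, and your sketch is too vague to certify that it delivers the stated error term. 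The theorem of [BFI] quoted in \eqref{eq:3.1} already bounds $\sum_{q\le Q}\bigl|\pi(x;q,a)-\pi(x)/\varphi(q)\bigr|$ by $x(\log\log x/\log x)^2$ for \emph{all} moduli $q\le \sqrt x(\log x)^A$ with a fixed residue $a\ne 0$ and with no factorisation hypothesis on $q$; since your residue class is the fixed class $1$ modulo $d$, decorated by the fixed congruence $3\ (\mod 8)$ which is absorbed by the variant \eqref{eq:3.2}, this single estimate swallows the entire range of $d$ in one application and is precisely the source of the error term in \eqref{eq:1.2}. Recognizing that this off-the-shelf theorem replaces the whole dispersion apparatus is the point of the paper.

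There is also a technical gap created by your sharp cutoff at $d^2\le m$: in the critical range the resulting condition $p>2d^2$ ties the length of the prime sum to the modulus, whereas \eqref{eq:3.1} is an average over $q$ at a fixed $x$; your remark that this condition costs $O(d)$ per class disposes of it only for $d\le\sqrt x(\log x)^{-B}$, not where the difficulty lies. The paper circumvents this by performing the divisor switch smoothly, see \eqref{eq:2.4}: on a dyadic block $X<p\le 2X$ one integrates over a cutoff $y\asymp\sqrt X$ against $w(y/Y)\,dy/y$, after which both halves of the divisor sum run over $b<y$ with the weight $w(y/Y)+w(p/2yY)$ depending on $p$ only through a smooth factor, to which the smoothed equidistribution statement \eqref{eq:3.3} applies. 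You would need either to adopt this smoothing or to decouple $p$ from $d$ by partial summation before invoking \eqref{eq:3.1}; with that repair, and with the direct application of \eqref{eq:3.1}--\eqref{eq:3.3} in place of your bilinear detour, your argument becomes the paper's proof.
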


\medskip 
\noindent The other reduced residue classes modulo $8$ can be covered by 
essentially the same arguments but we do not treat them. 

Note that the theorem shows that the integers $p-1$ tend to have 
about as many representations as the sum of two squares as does a typical 
integer $n$. Recall also that, if the number of representable $p-1$ is counted 
without multiplicity in $a$ and $b$, then the order of magnitude is given by  
$x/(\log x)^{3/2}$ by a theorem of the second-named author [I].

\section {\bf Dirichlet divisor switching}
 
Let 
$\lambda = 1 * \chi$  that is
\begin{equation}\label{eq:2.1}
\lambda (n) =\sum_{ab=n} \chi (a)
\end{equation}  
This is similiar in many respects to the divisor function $\tau(n)$. The number of representations of $n$ as the sum of two squares is equal to $4\lambda (n)$. If $n\equiv 1 (\mod 4)$ then, in \eqref{eq:2.1}, $\chi(a)$ can be replaced by $\chi (b)$; therefore we can write 
\begin{equation}\label{eq:2.2}
\lambda (n) =\sum_{\substack{a|n\\a\le y}} \chi (a) +\sum_{\substack{b|n\\b<n/y}} \chi (b)
\end{equation}  
for any $y>0$. We can refine this partition by integrating over $y$ against a smooth weight function. Let $w(t)$ be a smooth function supported on $1\le t\le 2$ such that 
\begin{equation}\label{eq:2.3}
\int_0^\infty w(t)t^{-1} dt =1 . 
\end{equation}  

Let $Y\ge 1$, multiply \eqref{eq:2.2} by $w(y/Y)$ and integrate with the measure 
$y^{-1} dy$, getting 
\begin{equation}\label{eq:2.4}
\lambda (n) =\int_0^\infty \Bigl[w\bigl(\frac{y}{Y}\bigr)+w\bigl(\frac{n}{yY}\bigr)\Bigr] \Bigl(\sum_{\substack{b|n\\ b<y}}\chi(b)\Bigr)\frac {dy}{y}\ .
\end{equation}  
Note that if $X<n\le 2X$ we can choose $Y=\sqrt X$ so the integration in 
\eqref{eq:2.4} runs over the segment $\frac12\sqrt X < y < 2\sqrt X$.

\section {\bf Primes in arithmetic progressions}

The key input which greatly streamlines the proof is the main result of 
[BFI] which gives asymptotics of Bombieri-Vinogradov type for the distribution 
of primes in arithmetic progressions and which treats moduli of the progression 
which go beyond the range of that which can be sucessfully handled even on the 
assumption of the Generalized Riemann Hypothesis. 

We state this restricted to a range somewhat lesser than that in [BFI], which is however 
sufficient for our needs 
and is conveniently recorded as Theorem 2.2.1 of [FI]. 

\begin{equation}\label{eq:3.1}
\sum_{\substack{q\le Q\\(q,a)=1}}\Bigl|\pi(x;q,a)-\frac{\pi(x)}{\varphi(q)}\Bigr|\ll 
x\Bigl(\frac{\log\log x}{\log x}\Bigr)^2 
\end{equation} 
for $Q=\sqrt x (\log x)^A$ with any $a\neq 0$, $A\ge 0$, $x \ge 3$, the implied 
constant depending only on $a$ and $A$. 


We actually require a slightly modified form of \eqref{eq:3.1} which 
follows from it 
in two easy steps. In the first place we have 
\begin{equation}\label{eq:3.2}
\sum_{\substack{q\le Q\\(q,a)=1\\(q,k)=1}}
\Bigl|\sum_{\substack{p\le x\\p\equiv a (\mod q)\\p\equiv\ell (\mod k)}}1 
-\frac{\pi(x)}{\varphi(qk)}\Bigr|\ll 
x\Bigl(\frac{\log\log x}{\log x}\Bigr)^2 
\end{equation} 
for $Q=\sqrt x (\log x)^A$ with any $a\neq 0$, $k\ge 1$, $(\ell, k)=1$, 
$A \ge 0$, $x \ge 3$, the implied 
constant depending only on $a$,$k$ and $A$. To this end one merely splits the 
indexed variables into classes modulo $k$, which is harmless for $k$ fixed. 

In the second step we modify \eqref{eq:3.2} to a counting of primes with 
smooth weight. 
\begin{lemma}
Let $f(t)$ be a smooth function supported on $1\le t\le2$. We have 
\begin{equation}\label{eq:3.3}
\sum_{\substack{q\le Q\\(q,a)=1\\(q,k)=1}}\Bigl|\sum_{\substack{p\le x\\p\equiv a (\mod q)\\p\equiv\ell (\mod k)}}f\bigl(\frac{p}{X}\bigr) -\frac{1}{\varphi(qk)}\sum_pf\bigl(\frac{p}{X}\bigr)\Bigr|\ll 
x\Bigl(\frac{\log\log x}{\log x}\Bigr)^2 
\end{equation} 
for $Q=\sqrt x (\log x)^A$ with any $a\neq 0$, $k\ge 1$, $(\ell, k)=1$, 
$A \ge 0$, $x \ge 3$, the implied 
constant depending only on $a$,$k$, $A$ and $f$.
\end{lemma}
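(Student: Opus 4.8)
The plan is to deduce \eqref{eq:3.3} from \eqref{eq:3.2} by partial summation in the prime variable $p$, treating the smooth weight $f(p/X)$ as a superposition of sharp cutoffs. First I would note that the sum in \eqref{eq:3.3} is supported on $X\le p\le 2X$, so only primes in a bounded dyadic range contribute; thus we may assume $X\le x$, and in fact $X\asymp x$ up to a bounded factor for the error term to be meaningful, while for small $X$ the whole expression is trivially within the stated bound. Writing $S(t;q)=\sum_{p\le t,\ p\equiv a(q),\ p\equiv\ell(k)}1$ and $\pi^*(t;q)=\varphi(qk)^{-1}\sum_{p\le t}1$, the inner difference in \eqref{eq:3.3} is
\begin{equation}\label{eq:plan1}
\sum_{p\equiv a(q),\ p\equiv\ell(k)}f\bigl(\tfrac{p}{X}\bigr)-\frac{1}{\varphi(qk)}\sum_p f\bigl(\tfrac{p}{X}\bigr)=-\int_X^{2X}\bigl(S(t;q)-\pi^*(t;q)\bigr)\,\frac{1}{X}f'\bigl(\tfrac{t}{X}\bigr)\,dt,
\end{equation}
after integration by parts, using that $f$ is supported in $[1,2]$ so the boundary terms vanish.

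Next I would take absolute values inside the integral, bound $|f'(t/X)|\ll_f 1$, and sum over $q\le Q$ with $(q,a)=(q,k)=1$, interchanging the finite sum with the integral to get
\begin{equation}\label{eq:plan2}
\sum_{\substack{q\le Q\\(q,a)=1\\(q,k)=1}}\Bigl|\cdots\Bigr|\ll_f \frac{1}{X}\int_X^{2X}\ \sum_{\substack{q\le Q\\(q,a)=1\\(q,k)=1}}\bigl|S(t;q)-\pi^*(t;q)\bigr|\,dt.
\end{equation}
For each fixed $t$ in the range $X\le t\le 2X\le 2x$, the inner sum over $q$ is exactly the left side of \eqref{eq:3.2} with $x$ replaced by $t$, and the admissible modulus range there is $Q=\sqrt{x}(\log x)^A\ge \sqrt{t}(\log t)^A$, so \eqref{eq:3.2} applies uniformly and gives the bound $t(\log\log t/\log t)^2\ll x(\log\log x/\log x)^2$ for $t\ge 3$ (the cases $t<3$ being vacuous since then there are no primes $p\le t$ once $X$ is large, and $X$ small is handled trivially). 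Hence the integral in \eqref{eq:plan2} is $\ll X\cdot x(\log\log x/\log x)^2/X=x(\log\log x/\log x)^2$, which is the claimed bound; the dependence of the implied constant is on $a,k,A,f$ as asserted.

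The one genuine point requiring care — and the step I expect to be the mild obstacle — is the uniformity of the modulus range as $t$ varies over $[X,2X]$: one must check that $Q=\sqrt{x}(\log x)^A$ remains an admissible cutoff for \eqref{eq:3.2} at every $t\le 2x$, which it does since $\sqrt{x}(\log x)^A\ge\sqrt{t}(\log t)^{A}/C$ for $t\le 2x$ and any constant $C$ can be absorbed by increasing $A$, or alternatively by noting \eqref{eq:3.2} trivially tolerates shortening the range of $q$. A second small technical wrinkle is that $f'$ may change sign, but since we pass to absolute values immediately after \eqref{eq:plan1} this is irrelevant; likewise the fact that $\sum_p f(p/X)$ differs from $\pi(2X)-\pi(X)$ by the smoothing is automatically accounted for because we compare $S(t;q)$ to the same smoothed main term $\pi^*(t;q)$ throughout. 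No new arithmetic input beyond \eqref{eq:3.2} is needed; the lemma is purely a packaging of the sharp-cutoff estimate into smooth-weight form via \eqref{eq:plan1}.
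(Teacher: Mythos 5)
Your proof is correct and is essentially the paper's own argument: both write the smooth weight as a superposition of sharp cutoffs via $f(p/X)=-\int_{p/X}^{\infty}f'(t)\,dt$, apply \eqref{eq:3.2} at each cutoff $tX$, and integrate over $t$. Your extra remarks on the uniformity of the modulus range $Q$ and the relation between $x$ and $X$ are sensible bookkeeping that the paper leaves implicit.
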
 

\begin{proof} 
We write 
$$
f\bigl(\frac{p}{X}\bigr)= -\int^\infty_{p/X}f'(t)dt .
$$
Given $1\le t\le 2$ this implies $p\le tX$. Applying \eqref{eq:3.2} with 
$x=tX$ and integrating the result over $t$, we derive \eqref{eq:3.3}.
\end{proof}

\section {\bf Proof of the theorem}

We have 
\begin{equation}\label{eq:4.1}
S(x) = 4\sum_{\substack{p\le x\\p\equiv 3(\mod 8)}}\lambda\bigl(\frac{p-1}{2}\bigr) .
\end{equation} 
We are going to evaluate 
\begin{equation}\label{eq:4.2}
T(X) = S(2X)-S(X) = 4\sum_{\substack{X<p\le 2X\\p\equiv 3(\mod 8)}}
\lambda\bigl(\frac{p-1}{2}\bigr) 
\end{equation} 
for every $X\ge 3$. Applying \eqref{eq:2.4} we write 
$$
T(X)= 4\int\sum_{b<y}\chi(b)\sum_{\substack{X<p\le 2X\\p\equiv 1 (\mod b)\\p\equiv 3(\mod 8)}} 
\Bigl[w\bigl(\frac{y}{Y}\bigr)+w\bigl(\frac{p-1}{2yY}\bigr)\Bigr]\frac{dy}{y}
$$ 
where we choose $Y=\sqrt X$. Here we can replace $w((p-1)/2yY)$ by $w(p/2yY)$ 
up to an error term $O(1/yY)$ which contributes to $T(X)$ a bounded amount: 
$$
T(X)= 4\int\sum_{b<y}\chi(b)\sum_{\substack{X<p\le 2X\\p\equiv 1 (\mod b)\\p\equiv 3(\mod 8)}} 
\Bigl[w\bigl(\frac{y}{Y}\bigr)+w\bigl(\frac{p}{2yY}\bigr)\Bigr]\frac{dy}{y} 
+O(1)\, .
$$ 
Note that the integration runs over the segment $\frac14 \sqrt X <y<2\sqrt X$. 
Now we can apply \eqref{eq:3.2} for the first term and \eqref{eq:3.3} for the 
second term with $q=b$, $k=8$, $\ell =3$, getting 
$$
T(X)= \int\sum_{b<y}\frac{\chi(b)}{\varphi(b)}\sum_{X<p\le 2X} 
\Bigl[w\bigl(\frac{y}{Y}\bigr)+w\bigl(\frac{p}{2yY}\bigr)\Bigr]\frac{dy}{y} 
+O\Bigl(X\Bigl(\frac{\log\log X}{\log X}\Bigr)^2\Bigr)\, .
$$ 
Next, we replace the sum over $b<y$ by the complete series
\begin{equation}\label{eq:4.3}
c_1 =\sum_b\frac{\chi(b)}{\varphi(b)}=L(1,\chi)
\prod_p\Bigl(1+\frac{\chi(p)}{p(p-1)}\Bigr) 
\end{equation} 
up to an error term $O(1/y)$ which contributes to $T(X)$ at 
most $O(\sqrt X/\log X)$. Now the free integration over $y$ yields 
(see \eqref{eq:2.3}) 
$$
\int \Bigl[w\bigl(\frac{y}{Y}\bigr)+w\bigl(\frac{p}{2yY}\bigr)\Bigr]\frac{dy}{y} 
=2\, . 
$$ 
Therefore, 
$$
T(X)= 2c_1\bigl(\pi(2X)-\pi(X)\bigr) +O\Bigl(X\Bigl(\frac{\log\log X}{\log X}\Bigr)^2\Bigr)\, .
$$
Summing this over $X=2^{-n}x$, $n=1,2,3,\ldots$, we derive \eqref{eq:1.2}, thus completing the proof of theorem 1. \endproof

\medskip 
Department of Mathematics, University of Toronto

Toronto, Ontario M5S 2E4, Canada 

\medskip

Department of Mathematics, Rutgers University

Piscataway, NJ 08903, USA

\end{document}